\title[Rationality of cycles over function field of $\mathbf{SL_1}(A)$-torsors]{Rationality of algebraic cycles over function field of $\mathbf{SL_1}(A)$-torsors}
\date{2 September 2014}
\subjclass[2010]{14C25; 20G15.}
\author{{Raphael Fino}}
\address
{UPMC Sorbonne Universit\'es\\
Institut de Math\'ematiques de Jussieu\\
Paris\\
FRANCE}
\address
{{\it Web page:}
{\tt www.math.jussieu.fr/\~{ }fino}}
\email {raphael.fino {\it at} imj-prg.fr}
\numberwithin{equation}{section}
\theoremstyle{definition}
\newtheorem{rem}[equation]{Remark}
\newtheorem{prop}[equation]{Proposition}
\newtheorem{thm}[equation]{Theorem}
\newcommand{\twoheadlongrightarrow}{\relbar\joinrel\twoheadrightarrow}
\begin{document}

\begin{abstract}
In this note we prove a result comparing rationality of algebraic cycles over the function field
of a $\mathbf{SL_1}(A)$-torsor for a central simple algebra $A$ 
and over the base field.

\smallskip
\noindent \textbf{Keywords:} Chow groups, central simple algebras,
principal homogeneous spaces.
\end{abstract}

\maketitle
\tableofcontents

\section{Introduction}

Let $A$ be a central simple algebra over a field $F$ and let $\text{Nrd}:A^{\times}\rightarrow F^{\times}$
be the reduced norm homomorphism .
We recall that the homomorphism $F^{\times}\rightarrow H^1(F,\mathbf{SL_1}(A))$,
associating to $c\in F^{\times}$ the $\mathbf{SL_1}(A)$-torsor $X_c$ given
by the equation $\text{Nrd}=c$, is surjective (with kernel $\text{Nrd}(A^{\times})$) -- see \cite[Proposition 2.7.3]{CSAGC} for instance.

\medskip

The main purpose of this note is to prove the following theorem dealing with rationality of algebraic cycles over 
function field of $\mathbf{SL_1}(A)$-torsors.

\begin{thm}
\textit{Let $A$ be a central simple algebra of prime degree $p$ over a field $F$ and let $X$ be a $\mathbf{SL_1}(A)$-torsor.
Then}
\begin{enumerate}[(i)]
\item \textit{for any equidimensional $F$-variety $Y$, the change of field homomorphism}
\[\text{CH}(Y)\rightarrow \text{CH}(Y_{F(X)}),\]
\textit{where} $\text{CH}$ \textit{is the integral Chow group, is surjective in codimension $< p+1$.}

\item \textit{it is also surjective in codimension $p+1$
for a given $Y$ provided that the variety $X_{F(\zeta)}$ does not have any closed point of prime to $p$ degree for each generic point $\zeta \in Y$.}
\end{enumerate}
\end{thm}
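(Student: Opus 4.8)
The plan is to reduce to the case where $A$ is a division algebra, to translate the assertion into a statement about the diagonal cycle of a complete model of $X$, and to prove that statement by computing the low‑codimensional Chow groups of that model, where the primality of $p=\deg A$ is the essential ingredient. I expect this last step to be the real work.

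\textbf{Split case.} If $A$ is split then $\mathbf{SL_1}(A)=\mathbf{SL}_p$ is special, so $X$ is the trivial torsor and $X\cong\mathbf{SL}_p$ as an $F$‑variety. Since $\mathbf{SL}_p$ is $F$‑rational (for instance by its big Bruhat cell $U^-\times T\times U^+$), the field $F(X)$ is purely transcendental over $F$; filtering this extension into steps $K\subset K(t)$ and using homotopy invariance (so that $\mathrm{CH}(Y_K)\xrightarrow{\sim}\mathrm{CH}(Y_K\times\mathbb{A}^1)\twoheadrightarrow\mathrm{CH}(Y_{K(t)})$), the change‑of‑field map is surjective in every codimension, and (i), (ii) hold. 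So assume from now on that $A$ is a division algebra of degree $p$.

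\textbf{Reduction to a diagonal statement.} Fix an equidimensional $Y$ and $\alpha\in\mathrm{CH}^i(Y_{F(X)})$, and choose a smooth complete $F$‑variety $\bar X$ containing $X$ as a dense open subvariety, so that $F(\bar X)=F(X)$ and $\bar X$ is geometrically rational (over a splitting field of $A$ the open part $X$ becomes $\mathbf{SL}_p$). Since $Y_{F(X)}$ is the generic fibre of $\mathrm{pr}_{\bar X}\colon Y\times\bar X\to\bar X$, the localization sequence gives a surjection $\mathrm{CH}^i(Y\times\bar X)\twoheadrightarrow\mathrm{CH}^i(Y_{F(X)})$ whose kernel is spanned by cycles lying over a proper closed subset of $\bar X$ (``vertical'' cycles); lift $\alpha$ to $\tilde\alpha$. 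Viewing classes on $Y\times\bar X$ as correspondences from $\bar X$ to $Y$, one has $\tilde\alpha=\tilde\alpha\circ[\Delta_{\bar X}]$, and if
\[ [\Delta_{\bar X}] = [\bar X\times z_0] + (\text{a cycle supported over a proper closed subset of the first factor}) \]
in $\mathrm{CH}^{\dim\bar X}(\bar X\times\bar X)$ with $z_0$ a closed point of $X$, then applying this correspondence to $\tilde\alpha$ and using the projection formula yields $\tilde\alpha=\mathrm{pr}_Y^*(\beta)+(\text{vertical})$ with $\beta=\mathrm{cores}_{F(z_0)/F}(\tilde\alpha|_{z_0})\in\mathrm{CH}^i(Y)$, whence $\alpha=\beta_{F(X)}$. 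A \emph{full} such decomposition would force surjectivity in all codimensions (and would amount to retract rationality of $X$), which is too strong; what one establishes is a \emph{truncated} version of this identity, valid modulo cycles that act trivially on classes of codimension $\leq p$ (resp. $\leq p+1$), which is exactly what is needed to conclude in those codimensions.

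\textbf{The Chow groups of $\bar X$.} The core is therefore to establish this truncated diagonal decomposition: to show that the low‑codimensional Chow groups of $\bar X$, and of $\bar X\times\bar X$ modulo vertical cycles, are as small as possible — generated by cycles defined over $F$, with the only ambiguity coming from the index $p$ of $A$. Here the primality of $p$ is used in the form: the algebras $A^{\otimes j}$ for $0<j<p$ all have index $p$, which forces any codimension‑$j$ class on $\bar X$ that becomes rational over a splitting field of $A$ to be rational already over a degree‑$p$ extension of $F$; so the first codimension at which an indecomposable $p$‑torsion class can break the decomposition is $p+1$. For a fixed $Y$ that obstruction is governed by the residue fields $F(\zeta)$ at the generic points $\zeta$ of $Y$, and it vanishes precisely when $X_{F(\zeta)}$ carries no closed point of degree prime to $p$ — equivalently, since $(F^\times)^p\subseteq\mathrm{Nrd}(A^\times)$ makes $F^\times/\mathrm{Nrd}(A^\times)$ $p$‑torsion and the reduced‑norm norm principle lets one descend prime‑to‑$p$ extensions, when $c\notin\mathrm{Nrd}(A_{F(\zeta)}^\times)$. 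This produces (i) in codimension $<p+1$ and (ii) in codimension $p+1$.

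\textbf{Main obstacle.} The first two steps are formal. The weight of the argument is the Chow‑group input above: determining the low‑codimensional Chow groups of a complete model of an $\mathbf{SL_1}(A)$‑torsor for prime $\deg A$, controlling the multiplicity/degree homomorphism, and locating the first torsion class — then packaging this as the precise truncated diagonal decomposition and checking that the codimension‑$(p+1)$ obstruction is killed exactly by the hypothesis in (ii). I would carry this out by adapting the techniques already developed for Severi–Brauer varieties and for quadrics.
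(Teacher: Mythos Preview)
Your diagonal-decomposition framework is a valid general strategy, but you have deferred the entire content of the theorem to the section labelled ``Main obstacle'' and then not carried it out. The heuristic you offer there --- that $A^{\otimes j}$ has index $p$ for $0<j<p$, so codimension-$j$ classes on $\bar X$ descend from degree-$p$ extensions --- is how one handles Severi--Brauer varieties (whose $K_0$ decomposes into twisted pieces indexed by the $A^{\otimes j}$), but there is no analogous description for a compactified $\mathbf{SL}_1(A)$-torsor: Panin's theorem gives $K_0(X)=\mathbb{Z}$, and the obstruction at codimension $p+1$ turns out to be governed not by the index of a tensor power but by the Rost invariant $[A]\cup(c)\in H^3(F,\mathbb{Z}/p\mathbb{Z}(2))$. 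Moreover, passing to a smooth compactification in arbitrary characteristic is itself a nontrivial step (the paper only invokes one in Section~4, under a characteristic restriction), and it is unnecessary here.

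The paper's route is both different and more direct. The reduction (Proposition~1.2) avoids compactifications entirely: one only needs $\mathrm{CH}^j(X_{F(y)})=0$ for the relevant $j$ and all points $y\in Y$. For $1\le j\le p$ this follows from Panin's $K_0(X)=\mathbb{Z}$: the topological filtration on $K_0(X)/p$ is then trivial in positive codimension, and Riemann--Roch without denominators identifies its $j$-th subquotient with $\mathrm{Ch}^j(X)$ for $j\le p$; combined with a transfer from a degree-$p$ splitting field (giving $p\cdot\mathrm{CH}^j(X)=0$) this yields $\mathrm{CH}^j(X)=0$. For $j=p+1$ the paper runs the Brown--Gersten--Quillen spectral sequence: Merkurjev's Adams-operation bound kills the differentials into $E^{p+1,-p-1}$ through page $p-1$, so $\mathrm{CH}^{p+1}(X)$ receives a surjective differential from $A^1(X,K_2)$; if it were nonzero, the Rost-invariant description of $A^1(X,K_2)$ would force the generator of $\mathrm{CH}^{p+1}(X_{F(X)})$ to be $F$-rational, and then Merkurjev's computation of $\mathrm{CH}_0(\mathbf{SL}_1(A_{F(X)}))$ would produce a prime-to-$p$ zero-cycle on $X$, contradicting the hypothesis of~(ii). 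None of this machinery --- Panin's $K_0$, the BGQ spectral sequence, the Rost invariant, Merkurjev's $\mathrm{CH}_0$ computation --- appears in your proposal, and the Severi--Brauer/quadric analogy does not supply a substitute.
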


The method of proof mainly relies on the following statement. 
This proposition is a version of the result
\cite[Lemma 88.5]{EKM} slightly altered to fit our situation 
(see also the proof of \cite[Proposition 2.8]{OSNV}).

\begin{prop}[Karpenko, Merkurjev]
\textit{Let $X$ be a smooth variety, and $Y$ an equidimensional variety. Given an integer $m$ such that for any nonnegative integer $i$ and any point $y\in Y$ of codimension $i$
the change of field homomorphism }
\[\text{CH}^{m-i}(X)\longrightarrow \text{CH}^{m-i}(X_{F(y)})\]
\textit{is surjective, the change of field homomorphism}
\[\text{CH}^m(Y)\longrightarrow \text{CH}^m(Y_{F(X)})\]
\textit{is also surjective.}
\end{prop}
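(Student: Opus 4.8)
Write $L=F(X)$ and set $W=X\times_F Y$, with projections $f\colon W\to X$ and $g\colon W\to Y$; throughout I assume $X$ connected, hence (being smooth) irreducible, so that $\text{CH}^0(X)=\mathbb{Z}$ -- this costs nothing for the intended application. The plan is to transfer the problem to the product $W$, solve it there by a d\'evissage along the fibration $g$, and finish by specialising at the generic point of $X$. First I would record two standard inputs. (a) Since $Y_L$ is the generic fibre of $f$, continuity of Chow groups gives $\text{CH}^m(Y_L)=\varinjlim_{\emptyset\neq U\subseteq X}\text{CH}^m(U\times Y)$ (cf.\ \cite{EKM}), and the base-change homomorphism $\mathrm{res}_{L/F}\colon\text{CH}^m(Y)\to\text{CH}^m(Y_L)$ is the composite of $g^{*}\colon\text{CH}^m(Y)\to\text{CH}^m(W)$ with restriction to that generic fibre. (b) By the localisation sequence each map $\text{CH}^m(W)\to\text{CH}^m(U\times Y)$ is surjective, hence so is $\text{CH}^m(W)\to\text{CH}^m(Y_L)$. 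Consequently it suffices to prove that the image in $\text{CH}^m(Y_L)$ of an arbitrary class $\alpha\in\text{CH}^m(W)$ lies in $\mathrm{res}_{L/F}(\text{CH}^m(Y))$.

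The core of the argument is the claim that every $\alpha\in\text{CH}^m(W)$ is a finite sum of external products $\gamma\times[Z]$, where $Z\subseteq Y$ is an integral closed subvariety of codimension $c\le m$, $[Z]\in\text{CH}^c(Y)$ denotes its class (using that $Y$ is equidimensional), and $\gamma\in\text{CH}^{m-c}(X)$. I would establish this by Noetherian induction on the codimension in the $Y$-direction of the support. Suppose a class is supported on $g^{-1}(T)=X\times T$ with every irreducible component of $T$ of codimension $\ge e$ in $Y$. Using the surjection $\bigoplus_{\text{components}}\text{CH}_{*}\twoheadrightarrow\text{CH}_{*}$, split it as a remainder supported in codimension $\ge e+1$ plus pieces supported on $X\times Z$ for the codimension-$e$ components $Z$. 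For such a piece $\beta_Z$, restriction to the generic fibre of $X\times Z\to Z$ over the generic point $\eta$ of $Z$ produces a class in $\text{CH}^{m-e}(X_{F(\eta)})$; as $\eta$ has codimension $e$ in $Y$, the hypothesis of the proposition furnishes $\gamma\in\text{CH}^{m-e}(X)$ with $\mathrm{res}_{F(\eta)/F}(\gamma)=\beta_Z|_{X_{F(\eta)}}$. Then $\beta_Z-f^{*}\gamma$ vanishes on that generic fibre, hence (by continuity) on $X\times V$ for some dense open $V\subseteq Z$, hence lies in the image of $\text{CH}_{*}(X\times(Z\setminus V))$, i.e.\ in codimension $\ge e+1$. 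Since $f^{*}\gamma$ pushes forward on $W$ to $\gamma\times[Z]$, iterating and stopping once the codimension exceeds $m$ -- where the relevant Chow group of $X\times T$ vanishes for dimension reasons -- yields the desired presentation of $\alpha$.

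To conclude, I would restrict each summand $\gamma\times[Z]$ to $Y_L=Y\times_F\mathrm{Spec}\,L$: it becomes $(\gamma|_{\mathrm{Spec}\,L})\times[Z]$, where $\gamma|_{\mathrm{Spec}\,L}\in\text{CH}^{m-c}(\mathrm{Spec}\,L)$ is the image of $\gamma$ at the generic point of $X$. When $c<m$ this is $\text{CH}^{>0}$ of a field, hence $0$, and the term vanishes; when $c=m$ one has $\gamma=n\,[X]$ with $n\in\mathbb{Z}$, so $\gamma\times[Z]=n\,g^{*}[Z]$ restricts to $\mathrm{res}_{L/F}(n[Z])$; and $c>m$ does not occur. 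Therefore $\alpha|_{Y_L}=\mathrm{res}_{L/F}(\beta)$ with $\beta=\sum_{c_j=m}n_j[Z_j]\in\text{CH}^m(Y)$, as required.

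The step I expect to be the main obstacle is the d\'evissage of the second paragraph -- the reduction of an arbitrary class on $X\times Y$ to a sum of external products. The delicate points are tracking codimensions correctly through possibly reducible closed subsets of $Y$, making rigorous within the continuity/localisation formalism the move that replaces a class vanishing on a generic fibre by one supported on a proper closed subset, and checking that $f^{*}\gamma$ on $X\times Z$ indeed pushes forward to the external product $\gamma\times[Z]$ on $W$. The remaining manipulations with the localisation sequence and base change are routine.
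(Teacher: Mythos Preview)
The paper does not supply its own proof of this proposition; it is stated with attribution to Karpenko--Merkurjev and a reference to \cite[Lemma 88.5]{EKM} (and \cite[Proposition 2.8]{OSNV}). Your d\'evissage argument is correct and is essentially the standard proof from that reference: reduce to $X\times Y$ via continuity and localisation, filter by the codimension of the $Y$-support, use the hypothesis at each generic point to strip off external products $\gamma\times[Z]$, and observe that only the $c=m$ summands survive restriction to the generic fibre of the projection to $X$.
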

The proof of Theorem 1.1 is given in Section 3.
In Section 4, we describe how this theorem can be related to a similar result
dealing with rationality of algebraic cycles over function field of projective
homogeneous varieties under some groups of exceptional type.

\medskip

\smallskip
\noindent
{\sc Acknowledgements.}
This note has been conceived while I was visiting the University of Alberta and
I would like to thank the Department of Mathematical and Statistical Sciences for the hospitality.
I also would like to thank Nikita Karpenko for suggestions having considerably improved this note.

\section{Preliminaries}

\subsection{Topological filtration and Chow groups}

For any smooth variety $X$ over a field $F$ (in this paper, an $F$-variety is a separated scheme of finite type over $F$), one can consider the topological filtration on the Grothendieck ring $K_0(X)$, whose term of 
codimension $i$ is given by
\[\tau^i(X)=\langle [\mathcal{O}_Z]\,|\,Z\hookrightarrow X \: \text{and}\: \text{codim}(Z)\geq i \rangle,\]
\noindent where $[\mathcal{O}_Z]$
is the class in $K_0(X)$ of the structure sheaf of a closed subvariety $Z$.
We write $\tau^{i/i+1}(X)$ for the successive quotients.
We denote by $pr^i$ the canonical surjection
\[\begin{array}{rcl}
\text{CH}^i(X) & \twoheadlongrightarrow & \tau^{i/i+1}(X) \\
\left[Z\right] & \longmapsto & [\mathcal{O}_Z]
\end{array},\]
where \text{CH} is the integral Chow group.
By the Riemann-Roch Theorem without denominators the $i$-th Chern class induces an homomorphism in the opposite way
$c_i : \tau^{i/i+1}(X) \rightarrow \text{CH}^i(X)$ such that the composition $c_i \circ pr$ is the multiplication by $(-1)^{i-1}(i-1)!$.

\medskip

Note that for any prime $p$, one can also consider the topological filtration $\tau_p$ on the ring $K_0(X)/pK_0(X)$ by replacing $K_0(X)$ by $K_0(X)/pK_0(X)$ in the previous definition.
In particular, we get that for any $0\leq i \leq p$, the map $pr_p^i: \text{Ch}^i(X)  \twoheadrightarrow  \tau_p^{i/i+1}(X)$, where $\text{Ch}$ is the Chow group modulo $p$, is an isomorphism.

\begin{rem}
Assume that $X$ is a $\mathbf{SL_1}(A)$-torsor and let $p$ be a prime. 
One has $K_0(X)=\mathbb{Z}$ by the result \cite[Theorem A]{Pan2} of I.\,Panin and consequently,
for $i\geq 1$, the term $\tau^{i}(X)$ is equal to zero. Therefore, for any $1\leq i\leq p$, one has $\text{Ch}^{i}(X)=0$.
Moreover, by the result \cite[Theorem 2.7]{sus91} of A.\,Suslin, one has $\text{CH}^i(\mathbf{SL_p})=0$
for any $i\geq 1$.
Hence, for
$A$ of degree $p$ (then there exists a splitting field of $A$ of degree $p$),
it follows by transfert argument that $p\cdot \text{CH}^i(X)=0$ for any $i\geq 1$.
Therefore, for $X$ a $\mathbf{SL_1}(A)$-torsor, with $A$ of prime degree $p$, one has $\text{CH}^{i}(X)=0$ for any $1\leq i\leq p$. 
Note that, by Proposition 1.2, this gives Theorem 1.1(i) already.
\end{rem}

\medskip

\subsection{Brown-Gersten-Quillen spectral sequence}

For any smooth variety $X$ and any $i\geq 1$,
the epimorphism $pr^i$ coincides with the edge homomorphism of the spectral Brown-Gersten-Quillen
structure $E^{i,-i}_2(X)\Rightarrow K_0(X)$ (see \cite[\S 7]{HKT1}),
that is to say 
\[pr^i: \text{CH}^{i}(X)\simeq E^{i,-i}_2(X)\twoheadrightarrow 
 \cdots \twoheadrightarrow
E^{i,-i}_{i+1}(X)=\tau^{i/i+1}(X).\]

Assume that $X$ is a $\mathbf{SL_1}(A)$-torsor, with $A$ of prime degree $p$.
Then it follows from Remark 2.1 that $E^{i,-i}_{i-1}(X)=0$ for $3\leq i\leq p$.
Consequently, one has $A^1(X,K_2)=E_p^{1,-2}(X)$.

\medskip

Moreover, by the result \cite[Theorem 3.4]{Adams} of A.\,Merkurjev, for any smooth variety $X$, every prime divisor $l$
of the order of the differential $\delta_r$ ending in $E^{p+1,-p-1}_{r}(X)$ is such that $l-1$ divides $r-1$. 
Therefore, for any prime $p$ and $2\leq r\leq p-1$, the differential $\delta_r$ is of prime to $p$ order. 
Assume furthermore that $X$ is a $\mathbf{SL_1}(A)$-torsor, with $A$ of prime degree $p$.
Since $p\cdot \text{CH}^{p+1}(X)=0$ (see Remark 2.1), one deduce that, for $2\leq r\leq p-1$, the differential $\delta_r$ is trivial. Consequently, one has $\text{CH}^{p+1}(X)=E_p^{p+1,-p-1}(X)$.

\medskip

Therefore, for $X$ a $\mathbf{SL_1}(A)$-torsor, with $A$ of prime degree $p$, the
differential $\delta_p$ in the BGQ-structure is a homomorphism
\[\delta : A^1(X,K_2)\rightarrow \text{CH}^{p+1}(X).\]

\begin{rem}
Let $X$ be a principal homogeneous space for a semisimple group $G$.
By \cite[Part II, Example 4.3.3 and Corollary 5.4]{Coh}, one has $E_2^{0,-1}(X)=A^0(X, K_1)= F^{\times}$  and the composition
 $F^{\times}=K_1(F)\rightarrow K_1(X) \rightarrow A^0(X, K_1)$
of the pullback of the structural morphism with the inclusions
\[K_1^{(0/1)}(X)=E_\infty^{0,-1}(X)\subset \cdots \subset E_3^{0,-1}(X)\subset E_2^{0,-1}(X)\]
given by the BGQ spectral sequence, is the identity. 
Therefore, for any $i\geq 2$, the differential starting from $E_i^{0,-1}(X)$ is zero, i.e
for any $i\geq 2$, one has 
\[E^{i, -i}_{i}(X)=\tau^{i/i+1}(X).\]
In particular, for $X$ a $\mathbf{SL_1}(A)$-torsor, with $A$ of prime degree $p$, one has
$E^{p+1, -p-1}_{p+1}(X)=0$, i.e the differential $\delta : A^1(X,K_2)\rightarrow \text{CH}^{p+1}(X)$
is surjective.
\end{rem}

\medskip

\subsection{On the group $A^1(X,K_2)$}
The proof in the next section will use 
the work of A.\,Merkurjev
on the \textit{Rost invariant} of simply connected algebraic groups (see \cite[Part II]{Coh}).
Let $X$ be a $\mathbf{SL_1}(A)$-torsor over $F$. The group $A^1(X_{F(X)},K_2)$ is infinite cyclic with generator $q$ and isomorphic to $A^1(\mathbf{SL_n},K_2)$ under restriction 
(where $n=\text{deg}(A)$). Furthermore, the restriction map 
$r:A^1(X,K_2)\rightarrow A^1(X_{F(X)},K_2)$ is injective with finite cokernel of same order as the element
$R_{\mathbf{SL_1}(A)}(X)$, where
 \[R_{\mathbf{SL_1}(A)}:H^1(F,\mathbf{SL_1}(A))\rightarrow H^3(F, \mathbb{Q}/\mathbb{Z}(2))\]
is the Rost invariant of $\mathbf{SL_1}(A)$ (see \cite[Theorem 9.10]{Coh}).
Moreover, the homomorphism $R_{\mathbf{SL_1}(A)}$ is of order $\text{exp}(A)$ 
by \cite[Theorem 11.5]{Coh}.

If $\text{char}(F)=l$ is prime then the modulo $l$ component 
$H^3(F, \mathbb{Z}/l\mathbb{Z}(2))$ of the Galois cohomology group 
$H^3(F, \mathbb{Q}/\mathbb{Z}(2))$ is the group $H_l^3(F)$ defined by
K. Kato in \cite{Ka1} by means of logarithmic differential forms.

\section{Proof of the result}

 In this section, we prove the result of this note.

\begin{thm}
\textit{Let $A$ be a central simple algebra of prime degree $p$ over a field $F$ and let $X$ be a $\mathbf{SL_1}(A)$-torsor.
Then}
\begin{enumerate}[(i)]
\item \textit{for any equidimensional $F$-variety $Y$, the change of field homomorphism}
\[\text{CH}(Y)\rightarrow \text{CH}(Y_{F(X)}),\]
\textit{where} $\text{CH}$ \textit{is the integral Chow group, is surjective in codimension $< p+1$.}

\item \textit{it is also surjective in codimension $p+1$ for a given $Y$ provided that the variety $X_{F(\zeta)}$ does not have any closed point of prime to $p$ degree for each generic point $\zeta \in Y$.}
\end{enumerate}
\end{thm}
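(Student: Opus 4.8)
The plan is to derive both parts of the theorem from Proposition 1.2, combined with the vanishing statements of Remark 2.1 and the description of $A^1(X,K_2)$ and of the differential $\delta$ given in Sections 2.2--2.3. First I would apply Proposition 1.2. Fix $m\le p+1$; as $X$ is smooth (being a torsor under a smooth group) and $Y$ is equidimensional, it suffices to check that for every integer $i\ge 0$ and every point $y\in Y$ of codimension $i$ the change of field map $\text{CH}^{m-i}(X)\to \text{CH}^{m-i}(X_{F(y)})$ is surjective. This is clear when $m-i<0$ (both groups vanish) and when $m-i=0$ (it is the identity of $\mathbb{Z}$, since $X$, being a torsor under a connected group, is geometrically integral, and so is $X_{F(y)}$); and when $1\le m-i\le p$ it holds because $X_{F(y)}$ is an $\mathbf{SL_1}(A_{F(y)})$-torsor with $A_{F(y)}$ of prime degree $p$, whence $\text{CH}^{m-i}(X_{F(y)})=0$ by Remark 2.1. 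Thus for $m<p+1$ the hypotheses of Proposition 1.2 hold unconditionally, which reproves part (i) (as already observed in Remark 2.1), while for $m=p+1$ the only case that survives is $i=0$. So the theorem reduces to the assertion: for every generic point $\zeta\in Y$, writing $L=F(\zeta)$, the map $\text{CH}^{p+1}(X)\to \text{CH}^{p+1}(X_L)$ is surjective, under the hypothesis that $X_L$ has no closed point of degree prime to $p$.

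To prove this, I would first note that $A_L$ cannot be split (otherwise $X_L$ would have an $L$-rational point), so $A_L$ is a division algebra and $\text{exp}(A_L)=p$. By Remark 2.2, applied over $F$ and over $L$, there are surjections
\[\delta : A^1(X,K_2)\twoheadrightarrow \text{CH}^{p+1}(X),\qquad \delta_L : A^1(X_L,K_2)\twoheadrightarrow \text{CH}^{p+1}(X_L),\]
which, by functoriality of the Brown--Gersten--Quillen spectral sequence, sit in a commutative square with the two restriction maps; hence it is enough to show that the restriction $A^1(X,K_2)\to A^1(X_L,K_2)$ is surjective. Now, by the material of Section 2.3, there is a commutative square
\[\begin{array}{ccc}
A^1(X,K_2)&\hookrightarrow&A^1(X_{F(X)},K_2)\\
\downarrow & &\downarrow\\
A^1(X_L,K_2)&\hookrightarrow&A^1(X_{L(X_L)},K_2)
\end{array}\]
with injective rows, whose right vertical arrow is an isomorphism (both groups being infinite cyclic and identified, under restriction, with $A^1(\mathbf{SL_n},K_2)$, a group unchanged by field extension), and whose rows have cokernels of orders $d:=\text{ord}\,R_{\mathbf{SL_1}(A)}(X)$ and $d_L:=\text{ord}\,R_{\mathbf{SL_1}(A_L)}(X_L)$ respectively. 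Since $R_{\mathbf{SL_1}(A_L)}(X_L)$ is the restriction of $R_{\mathbf{SL_1}(A)}(X)$, we get $d_L\mid d$, both dividing $\text{exp}(A)$, and a diagram chase shows that the restriction $A^1(X,K_2)\to A^1(X_L,K_2)$ has cokernel cyclic of order $d/d_L$. It remains to prove $d_L=p$: since $d_L$ divides $\text{exp}(A_L)=p$, either $d_L=p$ or $d_L=1$; in the latter case $R_{\mathbf{SL_1}(A_L)}(X_L)=0$, so, as the Rost invariant of $\mathbf{SL_1}$ of a prime-degree algebra has trivial kernel -- equivalently $\text{Nrd}(A_L^{\times})=\{c\in L^{\times}\mid (c)\cup[A_L]=0\}$ -- the torsor $X_L$ is trivial and thus has an $L$-rational point, contradicting the hypothesis. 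Hence $d_L=p$, which forces $d=p$ and $d/d_L=1$; so the restriction $A^1(X,K_2)\to A^1(X_L,K_2)$, and therefore $\text{CH}^{p+1}(X)\to \text{CH}^{p+1}(X_L)$, is surjective.

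The formal steps above (Proposition 1.2, Remark 2.1, Remark 2.2 and Section 2.3) are routine; the crux is the last one, namely the triviality of the kernel of the Rost invariant of $\mathbf{SL_1}$ of a prime-degree algebra, which is precisely what turns the arithmetic hypothesis on the closed points of $X_L$ into the equality $d=d_L$ that makes the comparison of the two copies of $A^1(-,K_2)$ an isomorphism. A secondary point needing care is the compatibility of the identifications $A^1(X_{F(X)},K_2)\cong A^1(X_{L(X_L)},K_2)\cong A^1(\mathbf{SL_n},K_2)$, i.e. the fact that $A^1(\mathbf{SL_n},K_2)$ does not change under field extension. Finally, in characteristic $p$ one works throughout with Kato's groups $H^3_p(-)$ in place of the $p$-primary part of $H^3(-,\mathbb{Q}/\mathbb{Z}(2))$, as recalled at the end of Section 2.3, and nothing else changes.
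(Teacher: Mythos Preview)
Your reduction via Proposition~1.2 to the codimension-$0$ points of $Y$, and your subsequent reduction via the surjections $\delta$ and $\delta_L$ of Remark~2.2 to the comparison of $A^1(X,K_2)$ with $A^1(X_L,K_2)$, are correct and natural. The diagram chase with the two infinite cyclic groups $A^1(X_{F(X)},K_2)$ and $A^1(X_{L(X_L)},K_2)$ is also fine, and your conclusion that the cokernel of $A^1(X,K_2)\to A^1(X_L,K_2)$ has order $d/d_L$ is right. So the argument goes through once you know $d_L=p$, which you deduce from the triviality of the kernel of $R_{\mathbf{SL_1}(A_L)}$.

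This is, however, genuinely different from the paper's route. The paper does \emph{not} take the kernel triviality of the Rost invariant as input; rather it proves the stronger absolute vanishing $\text{CH}^{p+1}(X)=0$ (whenever $X$ has no closed point of degree prime to~$p$) by a contradiction argument: if $\text{CH}^{p+1}(X)\neq 0$, one forces the generator $q$ of $A^1(X_{F(X)},K_2)$ to be $F$-rational, hence the generator $g$ of $\text{CH}^{p+1}(X_{F(X)})$ is $F$-rational, and then Merkurjev's computation of $\text{CH}_0(\mathbf{SL_1}(D))$ for division $D$ (\cite[Theorems~7.2 and~8.2]{528}) shows that $g^{p-1}\in \text{CH}_0(X_{F(X)})$ has degree prime to~$p$, contradicting the hypothesis on closed points. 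The kernel triviality of $R_{\mathbf{SL_1}(A)}$ is then obtained as a \emph{consequence} (Remark~3.2), valid in every characteristic.

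What each approach buys: the paper's argument yields the sharper vanishing $\text{CH}^{p+1}(X)=0$ and, independently of Merkurjev--Suslin, the injectivity of $R_{\mathbf{SL_1}(A)}$ for prime-degree $A$, in arbitrary characteristic. Your argument is shorter and more transparent once one is willing to invoke that injectivity as a black box, but then in characteristic~$p$ you must justify it by something other than \cite{MS} (which is stated under $\text{char}(F)\neq p$); your final sentence ``nothing else changes'' glosses over precisely this point. If you keep your approach, either restrict to $\text{char}(F)\neq p$, or supply an independent reference for the kernel triviality in characteristic~$p$.
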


\begin{proof}
We use notations and materials introduced in the previous section.
One can assume that $X$ does not have any rational point over $F$
(or equivalently $X$ does not have any closed point of prime to $p$ degree, by the result 
\cite[Theorem 3.3]{black} of J.\,Black), if else there
is nothing to prove. Note that in this situation, the central simple algebra $A$ is necessarily a division algebra.
We recall that conclusion (i) has already been proved (see Remark 2.1).
According to Proposition 1.2, it suffices to show that $\text{CH}^{p+1}(X_{F(\zeta)})=0$
for each generic point $\zeta \in Y$ to get conclusion (ii). Since
$X_{F(\zeta)}$ does not have any closed point of prime to $p$ degree, it is enough
to prove that $\text{CH}^{p+1}(X)=0$.

Assume on the contrary that $\text{CH}^{p+1}(X)\neq 0$. Then $\delta : A^1(X,K_2)\rightarrow \text{CH}^{p+1}(X)$ is nonzero (since $\delta$ is surjective by Remark 2.2), i.e $E_{p+1}^{1,-2}(X)$
is strictly included in $E_{p}^{1,-2}(X)=A^1(X,K_2)$.
We claim that this implies that, by denoting as $q_X$ the generator of $A^1(X,K_2)$, one has
$r(q_X)=q$. Indeed, otherwise one has $r(q_X)=p\cdot q$ by \S 2.3. Consecutively, by denoting
as $c$ the corestriction morphism $A^1(\mathbf{SL_p},K_2)\rightarrow A^1(X,K_2)$,
for any $i\geq 2$, one has $c(E_i^{1,-2}(\mathbf{SL_p}))=c(A^1(\mathbf{SL_p},K_2))=A^1(X,K_2)$ (where the first identity is due to $\text{CH}^i(\mathbf{SL_p})=0$ for any $i\geq 2$).
In particular, one has $E_{p}^{1,-2}(X)=c(E_{p+1}^{1,-2}(\mathbf{SL_p}))\subset E_{p+1}^{1,-2}(X)$,
which is a contradiction.

Therefore, we have shown that under the assumption $\text{CH}^{p+1}(X)\neq 0$, the generator
$q$ of $A^1(X_{F(X)},K_2)$ is rational.
Then it follows that the generator $g$ of $\text{CH}^{p+1}(X_{F(X)})$ is also rational.

However, since $A_{F(X)}$ is a still a division algebra, by \cite[Theorem 7.2 and Theorem 8.2]{528}, the cycle $g^{p-1}$ in $\text{CH}_0(\mathbf{SL_1}(A_{F(X)}))$ is nonzero and the latter group is cyclic
of order $p$ generated by the class of the identity of $\mathbf{SL_1}(A_{F(X)})$.
Thus, the degree of the rational cycle $g^{p-1}$ is prime to $p$.

It follows that $X$ has a closed point of prime to $p$ degree, which is a contradiction.

The Theorem is proved.
\end{proof}

\begin{rem}
The end of the above proof shows in particular that for a division algebra $A$ of prime degree $p$ over a
field $F$,
the kernel of the Rost invariant  $R_{\mathbf{SL_1}(A)}$ is trivial. This is already contained
in the result \cite[Theorem 12.2]{MS} of A.\,Merkurjev and A.\,Suslin under the assumption $\text{char}(F)\neq p$.
Indeed, let $\xi \in H^1(F,\mathbf{SL_1}(A))$ and let $X$ be the associated $\mathbf{SL_1}(A)$-torsor.
Assume that $R_{\mathbf{SL_1}(A)}(\xi)$ is trivial. It follows then by \S 2.3 that the generator of
$A^1(X_{F(X)},K_2)$ is rational. As we have seen in the above proof, this implies that $X$ 
has a rational point over $F$, i.e the cocycle $\xi$ is trivial.

Note also that for a division algebra $A$ of prime degree $p$ over a field $F$, the Rost invariant  $R_{\mathbf{SL_1}(A)}$ coincides, up to sign, with the normalized invariant given by the cup product 
$[A]\cup (c)\in H^3(F,\mathbb{Z}/p\mathbb{Z}(2))$ for any class $c\,\text{Nrd}(A^{\times})$, where $[A]$ is the class of the algebra $A$ 
in the Brauer group $\text{Br}(F)$, see \cite[\S 11]{Coh}.
\end{rem}

\section{Exceptional projective homogeneous varieties}

In this section, we describe how Theorem 1.1 implies a similar version of it for
projective homogeneous varieties under a group of type $F_4$ or $E_8$. 
Namely, we give an alternative proof of Theorem 4.1 below.
The following proof requires the characteristic of the base field to be different from
$p$, with $p=3$ when $G$ is of type $F_4$ and $p=5$ when $G$ is of type $E_8$, 
although the original result \cite[Theorem 1.1]{rcffephv} is valid for arbitrary characterisitic.

\medskip

Let $X$ be a nonsplit $\mathbf{SL_1}(A)$-torsor over a field $F$, with $A$ a division algebra of prime
degree $p$. There exists a smooth compactification $\Tilde{X}$ of $X$ such that the Chow motive
$\mathcal{M}(\Tilde{X},\mathbb{Z}/p\mathbb{Z})$ decomposes as a direct sum $\mathcal{R}_p\oplus N$,
where $\mathcal{R}_p$ is the indecomposable Rost motive associated with the symbol
$[A]\cup (c)\in H^3(F,\mathbb{Z}/p\mathbb{Z}(2))$, with $c\in F^{\times}\backslash \text{Nrd}(A^{\times})$ giving $X$, see \cite[Theorem 1.1]{528}.
Note that the projective variety $\Tilde{X}$ is a \textit{norm variety} of $s$.

\begin{thm}
\textit{Let $G$ be a linear algebraic group of type $F_4$ or $E_8$ over a field $F$
of characteristic different from $p$, 
with $p=3$ when $G$ is of type $F_4$ and $p=5$ when $G$ is of type $E_8$, 
and let $X'$ be a projective homogeneous $G$-variety.
For any equidimensional variety $Y$, the change of field homomorphism}
\[\text{Ch}(Y)\rightarrow \text{Ch}(Y_{F(X')}),\]
\textit{where} $\text{Ch}$ \textit{is the Chow group modulo $p$,
is surjective in codimension $<p+1$.}

\textit{It is also surjective in codimension $p+1$ for a given $Y$ provided that}
$1\notin \text{deg}\;\text{Ch}_0(X'_{F(\zeta)})$ \textit{for each generic point
$\zeta \in Y$.}
\end{thm}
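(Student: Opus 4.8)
The plan is to reduce Theorem 4.1 to Theorem 1.1 (= Theorem 3.1) applied to the $\mathbf{SL_1}(A)$-torsor $X$ introduced just before the statement, by exploiting the motivic decomposition $\mathcal{M}(\widetilde{X},\mathbb{Z}/p\mathbb{Z})\simeq \mathcal{R}_p\oplus N$. The key structural input is that, for $G$ of type $F_4$ (resp.\ $E_8$) and $p=3$ (resp.\ $p=5$), the mod-$p$ motive of any projective homogeneous $G$-variety $X'$ has the Rost motive $\mathcal{R}_p$ as a direct summand; more precisely, up to Tate twists and after passing to a suitable compactification, $X'$ and $\widetilde{X}$ share the same upper/lower Rost motive $\mathcal{R}_p$ attached to the symbol governing $G$. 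Thus the surjectivity of $\mathrm{Ch}(Y)\to\mathrm{Ch}(Y_{F(X')})$ in a given codimension is controlled entirely by the summand $\mathcal{R}_p$, and the same holds for $\widetilde{X}$ and hence for $X$ (since $X$ is an open subvariety of $\widetilde{X}$ and closed points survive under restriction to opens). Concretely, I would first record that $\mathcal{M}(X',\mathbb{Z}/p\mathbb{Z})$ is isomorphic, up to Tate twists, to a direct sum of copies of $\mathcal{R}_p$ and a ``split'' complement, so that $F(X')$ and $F(\widetilde{X})$ are \emph{motivic-equivalent} in the sense relevant for cycle rationality: a mod-$p$ cycle on $Y_{F(X')}$ is rational iff the corresponding cycle is rational on $Y_{F(\widetilde{X})}$.

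Next I would transfer the hypothesis. The assumption $1\notin\deg\,\mathrm{Ch}_0(X'_{F(\zeta)})$ for each generic point $\zeta\in Y$ must be shown equivalent to: $X_{F(\zeta)}$ has no closed point of degree prime to $p$. This is where the norm-variety structure enters — both $X'$ and $\widetilde{X}$ are norm varieties (in the appropriate sense) for the same symbol $[A]\cup(c)\in H^3(F,\mathbb{Z}/p\mathbb{Z}(2))$, and a norm variety has a closed point of prime-to-$p$ degree over an extension $E/F$ exactly when the governing symbol vanishes over $E$; equivalently, the condition $1\in\deg\,\mathrm{Ch}_0(-_E)$ is a property of the symbol alone, independent of which norm variety one chooses. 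So the hypothesis on $X'_{F(\zeta)}$ is equivalent to the hypothesis on $X_{F(\zeta)}$, and the integral statement of Theorem 3.1 for $X$ then yields the mod-$p$ statement for $Y$ over $F(X)$, hence (via the motivic equivalence) over $F(X')$. For the unconditional part (codimension $<p+1$), Remark 2.1 already gives $\mathrm{CH}^i(X)=0$ for $1\le i\le p$, so Theorem 3.1(i) applies with no hypothesis and transports across.

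The main obstacle is establishing the precise sense in which $F(X')$ and $F(\widetilde{X})$ are interchangeable for rationality of mod-$p$ cycles, i.e.\ making the phrase ``$\mathcal{R}_p$ is a common summand'' do real work. One needs: (a) that $\mathcal{M}(X',\mathbb{Z}/p\mathbb{Z})$ decomposes as a sum of Tate twists of $\mathcal{R}_p$ and Tate motives — this is the known structure of motives of projective homogeneous varieties under $F_4$/$E_8$ at the torsion prime, relying on characteristic $\neq p$ so that the Rost motive is available and the decomposition theorems apply; and (b) a lemma of the ``Karpenko–Merkurjev'' type (Proposition 1.2) or a direct outer-automorphism/transfer argument showing that surjectivity of $\mathrm{Ch}(Y)\to\mathrm{Ch}(Y_{F(X')})$ depends only on the isomorphism class of the motive with $\mathbb{Z}/p$-coefficients of $X'$, up to Tate twists and split summands. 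Granting (a) and (b), the rest is bookkeeping: apply Theorem 3.1 to $X$, use the norm-variety dictionary to match the two hypotheses on generic points, and conclude. I would isolate (b) as a short lemma and cite the decomposition (a) from \cite{528} and the standard references on motives of exceptional homogeneous varieties.
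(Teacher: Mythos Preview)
Your overall strategy matches the paper's: reduce to Theorem~1.1 by linking $X'$ with the compactified $\mathbf{SL_1}(A)$-torsor $\widetilde X$ through the common Rost motive $\mathcal{R}_p$, and transfer the degree hypothesis via the norm-variety dictionary. The gap is in how you carry out the comparison between $F(X')$ and $F(\widetilde X)$.

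Your lemma~(b) --- that surjectivity of $\text{Ch}(Y)\to\text{Ch}(Y_{F(X')})$ depends only on the motive of $X'$ modulo Tate twists and split summands --- is not a result one can simply cite, and the supporting claim that the complement of $\mathcal{R}_p$ in $\mathcal{M}(X',\mathbb{Z}/p\mathbb{Z})$ is a sum of Tate motives is stronger than what the references give (only that $\mathcal{R}_p$ is a summand is actually used). The paper avoids this entirely and instead executes the ``direct transfer argument'' you allude to: it shows that $X'$ and $\widetilde X$ each acquire a closed point of prime-to-$p$ degree over the other's function field (because $\mathcal{R}_p$ splits over each of those fields), and then, since both varieties are $A$-trivial in the sense of \cite{OSNV}, the right and bottom arrows in
\[
\begin{array}{ccc}
\text{Ch}(Y) & \longrightarrow & \text{Ch}(Y_{F(X')}) \\
\downarrow & & \downarrow \\
\text{Ch}(Y_{F(\widetilde X)}) & \longrightarrow & \text{Ch}(Y_{F(\widetilde X\times X')})
\end{array}
\]
are isomorphisms, so the top arrow is surjective iff the left one is; the latter is Theorem~1.1 since $F(\widetilde X)=F(X)$. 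This is more elementary and needs nothing about the full motivic decomposition of $X'$. Note also that the paper disposes of the case $1\in\deg\,\text{Ch}_0(X')$ separately at the outset (again by $A$-triviality), a reduction you should make explicit rather than tacitly assuming $G$ has no prime-to-$p$ splitting field.
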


\begin{proof}
Since the $F$-variety $X'$ is $A$-trivial in the sense of \cite[Definition 2.3]{OSNV},
one can assume that $G$ has no splitting field of degree coprime to $p$. 
Indeed, otherwise  $1\in \text{deg}\;\text{Ch}_0(X')$ by corestricition and 
this implies that $\text{Ch}(Y)\rightarrow \text{Ch}(Y_{F(X')})$ is an isomorphism
in any codimension by $A$-triviality, see \cite[Lemma 2.9]{OSNV}.

Let us now write $G={_{\xi}}G_0$ for 
a nontrivial cocycle $\xi \in H^1(F,G_0)$, with $G_0$ a split group of the same type as $G$.
Then the motive ${\mathcal{R}_p}(G)$ living on the Chow motive (with coefficients in $\mathbb{Z}/p\mathbb{Z}$) of $X'$ given in \cite[Theorem 5.17]{J-inv}
is the Rost motive of the symbol $R_{G_0,p}(\xi)=[A]\cup (c)\in H^3(F,\mathbb{Z}/p\mathbb{Z}(2))$,
where $R_{G_0,p}$ is the the modulo $p$ component of the Rost invariant $R_{G_0}$, $A$ is a division algebra of degree $p$ and $c\in F^{\times}\backslash  \text{Nrd}(A^{\times})$ 
 -- see \cite[\S 4]{F4} and \cite[\S 14]{Skip2} (here the assumption $\text{char}(F)\neq p$ is needed). 

Let us denote as $X$ the nonsplit $\mathbf{SL_1}(A)$-torsor over $F$ associated with $c$
and as $\Tilde{X}$ its smooth compactification.
We claim that $X'$ has a closed point of prime to $p$ degree over $F(\Tilde{X})$ and vice versa.

Indeed, since $\Tilde{X}$ is a norm variety for $[A]\cup (c)$, the motive ${\mathcal{R}_p}(G)$ decomposes
as a sum of Tate motives over $F(\Tilde{X})$. Therefore, the group $G_{F(\Tilde{X})}$ is split by an extension
of degree coprime to $p$ and 
it follows that $X'$ has a closed point of prime to $p$ degree over $F(\Tilde{X})$ 
(this is more generally true for any extension $L/F$ over which $\Tilde{X}$ has a closed point 
of prime to $p$ degree).
Moreover, the motive ${\mathcal{R}_p}(G)$ decomposes
as a sum of Tate motives over $F(X')$ because $G$ is split by $F(X')$. Consequently, $\Tilde{X}$ has a closed point of prime to $p$ degree over $F(X')$. 

It follows then (note that $\Tilde{X}$ is $A$-trivial by \cite[Example 5.7]{OSNV}) that the right and the bottom homomorphisms in the commutative square
\[\xymatrix{\text{Ch}(Y)\ar[r] \ar[d] & \text{Ch}(Y_{F(X')}) \ar[d] \\
 \text{Ch}(Y_{F(\Tilde{X})}) \ar[r] & \text{Ch}(Y_{F(\Tilde{X}\times X')}) }\]
are isomorphisms. Since $F(\Tilde{X})=F(X)$, Theorem 4.1 is now a direct consequence of Theorem 1.1.
\end{proof}

The following was pointed out to me by Philippe Gille.

\begin{rem}
Let $G_0$ a split group of type $E_8$ over a $5$-special field $F$ (i.e $F$ has no proper extension
of degree coprime to $5$) of characteristic $\neq 5$.
The above proof gives rise to a new argument for the triviality of the kernel of the Rost invariant modulo $5$
\[H^1(F,G_0)\rightarrow H^3(F,\mathbb{Z}/5\mathbb{Z}(2)).\]
This result is originally due to  Vladimir Chernousov (under the assumption $\text{char}(F)\neq 2$, $3$, $5$, see \cite[Theorem]{Ch1}).

Indeed, since $F$ is $5$-special, for any nontrivial cocycle $\xi\in H^1(F,G_0)$, the group
${_{\xi}}G_0$ has no splitting field of degree coprime to $5$. Then, as we have seen in the proof,
there is a division algebra $A$ of degree $5$ such that
$R_{G_0,5}(\xi)$ is equal to a symbol $[A]\cup (c)$ associated with a nonsplit 
$\mathbf{SL_1}(A)$-torsor $X$.
The injectivity of $R_{G_0,5}$ follows now from Remark 3.2.
\end{rem}

\bibliographystyle{acm} 
\bibliography{references}

\begin{thebibliography}{10}

\bibitem{black}
{\sc Black, J.}
\newblock Zero cycles of degree one on principal homogeneous spaces.
\newblock {\em Journal of Algebra 335}, 1 (2011), 232--246.

\bibitem{Ch1}
{\sc Chernousov, V.}
\newblock Remark on the (mod 5)-invariant of serre for groups of type ${E}_8$.
\newblock {\em Math. Notes. 56}, 1 (1994), 730--733.

\bibitem{EKM}
{\sc Elman, R., Karpenko, N., and Merkurjev, A.}
\newblock {\em The algebraic and geometric theory of quadratic forms}, vol.~56
  of {\em American Mathematical Society Colloquium Publications}.
\newblock American Mathematical Society, Providence, RI, 2008.

\bibitem{rcffephv}
{\sc Fino, R.}
\newblock Rationality of cycles over function field of exceptional projective
  homogeneous varieties.
\newblock {\em J. Ramanujan Math. Soc. 29}, 1 (2014), 119--132.

\bibitem{Skip2}
{\sc Garibaldi, S.}
\newblock Cohomological invariants: exceptional groups and spin groups.
\newblock {\em Memoirs Amer. Math. Soc. 200}, 937 (2009).
\newblock With an appendix by Detlev W. Hoffmann.

\bibitem{Coh}
{\sc Garibaldi, S., Merkurjev, A., and Serre, J.-P.}
\newblock {\em Cohomological invariants in Galois cohomology}, vol.~28 of {\em
  University Lecture Series}.
\newblock Amer. Math. Soc., 2003.

\bibitem{CSAGC}
{\sc Gille, P., and Szamuely, T.}
\newblock {\em Central simple algebras and {G}alois cohomology}, vol.~101 of
  {\em Cambridge Studies in Advanced Mathematics}.
\newblock Cambridge University Press, Cambridge, 2006.

\bibitem{OSNV}
{\sc Karpenko, N., and Merkurjev, A.}
\newblock On standard norm varieties.
\newblock {\em Ann. Sci. Ec. Norm. Sup. (4) 46}, fascicule 1 (2013), 175--214.

\bibitem{528}
{\sc Karpenko, N., and Merkurjev, A.}
\newblock Motivic decomposition of compactifications of certain group
  varieties.
\newblock {\em www.math.uni-bielefeld.de/lag/man/528.html\/} (22 Feb 2014), 18
  pages.
\newblock (Preprint).

\bibitem{Ka1}
{\sc Kato, K.}
\newblock Galois cohomology of complete discrete valuation fields.
\newblock {\em Lect. Notes in Math.}, 967 (1982), 215--238.

\bibitem{Adams}
{\sc Merkurjev, A.}
\newblock Adams operations and the {Brown}-{Gersten}-{Quillen} spectral
  sequence.
\newblock In {\em Quadratic forms, linear algebraic groups, and cohomology},
  vol.~18 of {\em Dev.Math.} Springer, New York, 2010, pp.~305--313.

\bibitem{MS}
{\sc Merkurjev, A., and Suslin, A.}
\newblock K-cohomology of {S}everi-{B}rauer varieties and the norm residue
  homomorphism.
\newblock {\em Izv. Akad. Nauk SSSR}, 46 (1982), 1011--1046.

\bibitem{F4}
{\sc Nikolenko, S., Semenov, N., and Zainoulline, K.}
\newblock Motivic decomposition of anisotropic varieties of type ${F}_4$ into
  generalized {R}ost motives.
\newblock {\em J. of K-theory 3}, 1 (2009), 114--124.

\bibitem{Pan2}
{\sc Panin, I.}
\newblock Splitting principle and {K}-theory of simply connected semisimple
  algebraic groups.
\newblock {\em Algebra i Analiz 10}, 1 (1998), 88--131.
\newblock translation in St. Petersburg Math. J. 10 (1999),
  $\text{n}^{\circ}$1, 69-101.

\bibitem{J-inv}
{\sc Petrov, V., Semenov, N., and Zainoulline, K.}
\newblock { $J$}-invariant of linear algebraic groups.
\newblock {\em Ann. Sci. Ec. Norm. Sup. (4) 41}, 6 (2008), 1023--1053.

\bibitem{HKT1}
{\sc Quillen, D.}
\newblock Higher algebraic {K}-theory. {I}. (1973).
\newblock vol.~341 of {\em Lecture Notes in Math.}, pp.~85--147.

\bibitem{sus91}
{\sc Suslin, A.~A.}
\newblock {K}-{T}heory and $\mathcal{K}$-cohomology of certain group varieties.
\newblock {\em Advances in Soviet Mathematics 4\/} (1991), 53--74.

\end{thebibliography}
\end{document}